\newtheorem{prethm}{{\bf Theorem}}
\newenvironment{thm}{\begin{prethm}{\hspace{-0.5
               em}{\bf.}}}{\end{prethm}}
\newtheorem{prepro}{{\bf Theorem}}
\newtheorem{precor}{{\bf Corollary}}
\newtheorem{preconj}{{\bf Conjecture}}
\newtheorem{preremark}{{\bf Remark}}
\newenvironment{remark}{\begin{preremark}\em{\hspace{-0.5
               em}{\bf.}}}{\end{preremark}}
\newtheorem{prelem}{{\bf Lemma}}
\newenvironment{lem}{\begin{prelem}{\hspace{-0.5
               em}{\bf.}}}{\end{prelem}}
\newtheorem{preproof}{{\bf Proof.}}
\newenvironment{proof}[1]{\begin{preproof}{\rm
               #1}\hfill{$\Box$}}{\end{preproof}}
\date {}
\title{\bf\Large  A note on zero-sum $5$-flows in regular  graphs}
\author{{}\\{}\\
{\normalsize{\sc S. Akbari\,${}^{a, b}$, N. Ghareghani\,${}^{ d, b}$,
G. B. Khosrovshahi\,${}^{b, e}$, S. Zare\,${}^{c}$}}
\\{\footnotesize{\it ${}^{a}$Department of
Mathematical Sciences, Sharif University of Technology}}
\\{\footnotesize{\it ${}^{b}$School of Mathematics, Institute for Research in Fundamental Sciences (IPM)}}
\\{\footnotesize{\it ${}^{c}$ Department of
Mathematical Sciences, Amirkabir University of Technology}}
\\{\footnotesize{\it ${}^{d}$ Department of
Mathematical Sciences, K.N. Toosi University of Technology}}
\\{\footnotesize{\it ${}^{e}$ Department of
Mathematical Sciences, University of Tehran}}
\thanks{{\it E-mail addresses}: $\mathsf{s\_akbari@sharif.edu}$
(S. Akbari), $\mathsf{ghareghani@ipm.ir}$ (N. Ghareghani), $\mathsf{
rezagbk@ipm.ir }$ (G.B. Khosrovshahi),
$\mathsf{sa\_zare\_f@yahoo.com}$ (S. Zare).}
\thanks {Keywords: Zero-sum flow, regular graph.}
\thanks {AMS (2000) { \it Subject classification}: 05C21, 05C22.}
\thanks {Corresponding author: S. Akbari.}}
\date{}
\begin{document}
\maketitle
\begin{abstract}
Let $G$ be a graph. A {\it zero-sum flow} of $G$ is an assignment
of non-zero real numbers to the edges such that the sum of the
values of all edges incident with each vertex is zero. Let $k$ be
a natural number. A {\it zero-sum $k$-flow} is a flow with values
from the set \mbox{$\{\pm 1,\ldots ,\pm(k-1)\}$}. It has been
conjectured that every $r$-regular graph, $r\geq 3$, admits a zero-sum
$5$-flow. In this paper we give an affirmative answer to this
conjecture, except for  $r=5$.
\end{abstract}

\date{}

\vspace{9mm} \noindent{\bf\large 1. Introduction}\vspace{5mm}

Nowhere-zero flows on graphs were introduced by Tutte \cite{tut} in
1949 and since then have been extensively studied by many
authors.  A great deal of research in the area has been motivated
by Tutte's 5-Flow Conjecture  which states that every $2$-edge connected
graph can have its edges directed and labeled by integers from
$\{1, 2, 3, 4\}$ in such a way that Kirchhoff's current law is
satisfied at each vertex. In 1983,  Bouchet \cite{bouchet}
generalized this concept to bidirected graphs. A \emph{bidirected graph} $G$ is a graph with vertex set $V(G)$
and edge set $E(G)$ such that each edge is oriented as one of
the four possibilities: 
\begin{picture}(45,5)(0,-2)
\thicklines
\put(15,0){\vector(-1,0){4}}\put(27,0){\vector(1,0){4}}
\thinlines \put(0,0){\line(1,0){40}} \put(0,0){\circle*{4}}
\put(40,0){\circle*{4}}
\end{picture},
\begin{picture}(45,5)(0,-2)
\thicklines \put(15,0){\vector(1,0){4}}\put(27,0){\vector(1,0){4}}
\thinlines \put(0,0){\line(1,0){40}} \put(0,0){\circle*{4}}
\put(40,0){\circle*{4}}
\end{picture},
\begin{picture}(45,5)(0,-2)
\thicklines
\put(15,0){\vector(1,0){4}}\put(27,0){\vector(-1,0){4}}
\thinlines \put(0,0){\line(1,0){40}} \put(0,0){\circle*{4}}
\put(40,0){\circle*{4}}
\end{picture},
\begin{picture}(45,5)(0,-2)
\thicklines
\put(15,0){\vector(-1,0){4}}\put(27,0){\vector(-1,0){4}}
\thinlines \put(0,0){\line(1,0){40}} \put(0,0){\circle*{4}}
\put(40,0){\circle*{4}}
\end{picture}.
 Let $G$ be a bidirected graph. For every $v \in V(G)$, the
set of all edges with tails (respectively, heads) at $v$ is denoted by
$E^+(v)$ (respectively, $E^-(v)$). The function $f:E(G) \longrightarrow
\mathbb{R}$ is a \emph{bidirected flow} of $G$ if for every $v
\in V(G)$, we have
$$\sum_{e \in E^+(v)} f(e) = \sum_{e \in E^-(v)} f(e).$$ If $f$ takes its values from the set \mbox{$\{\pm 1,\ldots ,\pm(k-1)\}$}, then it is called a \emph{nowhere-zero bidirected $k$-flow}.

Consequently, Bouchet proposed the following interesting  conjecture. \vspace{2mm}

\noindent {\bf Bouchet's Conjecture.} {\rm \cite{bouchet, xui}}
\textit{Every bidirected graph that has a nowhere-zero bidirected
flow admits a nowhere-zero bidirected $6$-flow.}

Bouchet proved that his conjecture is true if $6$ is replaced by
$216$. Then Zyka reduced $216$ to $30$  \cite{zyk}.

Let $G$ be a graph. A {\it zero-sum flow} for $G$ is an assignment of non-zero real numbers
to the edges  such that the sum of the values of all edges
incident with each vertex is zero. Let $k$ be a natural number. A {\it zero-sum $k$-flow} is a
flow with values from the set \mbox{$\{\pm 1,\ldots
,\pm(k-1)\}$}. The following conjecture was posed on the zero-sum flows in graphs.

\noindent{\bf Zero-Sum Conjecture (ZSC).} {\rm \cite{zerosum1}}   \textit{If $G$ is a graph
with a zero-sum flow, then $G$ admits a zero-sum $6$-flow.}
\vspace{2mm}

The following conjecture is an improved version of ZSC for regular graphs.

\noindent{\bf  Conjecture A.}   {\rm \cite{d}}  \textit{ Every $r$-regular graph ($r\geq 3$)
admits a zero-sum $5$-flow.}

Recently, in  connection with this conjecture the following two theorems were proved.

\begin{thm}{ \em \cite{zerosum1}} \label{evenreg}
 \textit{ Let $r$ be an even integer with $r\geq 4$. Then every
$r$-regular graph has a zero-sum $3$-flow.}
\end{thm}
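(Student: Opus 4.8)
The plan is to derive the theorem from Petersen's classical $2$-factorization theorem. Write $r=2k$ with $k\ge 2$. Recall that every $2k$-regular graph decomposes into $k$ spanning $2$-regular subgraphs ($2$-factors): one applies the Euler-tour argument on each connected component and then takes, for each $i$, the union of the $i$-th factors over all components, so disconnectedness causes no trouble. Fix such a decomposition $E(G)=F_1\cup\cdots\cup F_k$. The key observation is that if all edges of one factor $F_i$ receive a single common value $c_i$, then --- because $F_i$ is $2$-regular --- each vertex of $G$ receives exactly $2c_i$ from the edges of $F_i$. Consequently, labelling every edge of $F_i$ by $c_i$ for $i=1,\dots,k$ produces a zero-sum flow exactly when $\sum_{i=1}^{k}2c_i=0$, i.e. $\sum_i c_i=0$, and it is a zero-sum $3$-flow exactly when in addition each $c_i\in\{\pm1,\pm2\}$. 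Thus the problem reduces to the purely arithmetical task of picking $c_1,\dots,c_k\in\{\pm1,\pm2\}$ with $\sum_i c_i=0$.

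Carrying this out is immediate. If $k$ is even, assign $c_i=+1$ to $k/2$ of the factors and $c_i=-1$ to the remaining $k/2$; this in fact gives a zero-sum $2$-flow. If $k$ is odd, then $k\ge 3$; set $c_1=c_2=+1$ and $c_3=-2$, and among the remaining $k-3$ factors (an even number) give half the value $+1$ and half the value $-1$. In both cases $\sum_i c_i=0$ and every $c_i$ lies in $\{\pm1,\pm2\}$, so the resulting edge labelling is the desired zero-sum $3$-flow.

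The one point that deserves comment --- and the reason the statement promises only a $3$-flow --- is the odd-$k$ case. A zero-sum $2$-flow of a $2k$-regular graph is the same as a $k$-regular spanning subgraph, namely the set of edges labelled $+1$; for odd $k$ such a subgraph need not exist, the standard obstruction being $K_{2k+1}$, on which a $k$-regular spanning subgraph with $k$ odd would contradict the handshake lemma. Letting a single $2$-factor carry the value $\pm2$ is precisely what sidesteps this. I expect the only things to verify with a little care are that the small case $k=3$ is indeed covered by the recipe above (it is: two factors get $+1$ and one gets $-2$) and that the componentwise use of the $2$-factorization theorem is set up correctly; everything else is routine.
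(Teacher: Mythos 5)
Your proof is correct. Note that the paper does not prove this statement at all---it is quoted from \cite{zerosum1} as a known result---but your argument (decompose the $2k$-regular graph into $k$ two-factors via Petersen's theorem, give each factor a constant label $c_i\in\{\pm1,\pm2\}$ with $\sum_i c_i=0$, splitting into the cases $k$ even and $k$ odd) is exactly the two-factorization-plus-constant-labels technique that the paper itself employs in its Lemma 1, and your arithmetic in both parity cases checks out.
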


\begin{thm}{ \em \cite{d}}\label{3reg}  \textit{ Let $G$ be an $r$-regular graph.
If  $r$ is divisible by $3$, then  $G$ has a zero-sum $5$-flow.}
\end{thm}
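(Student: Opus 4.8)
The plan is to write $r=3t$ and split on the parity of $t$. We may assume $G$ is connected, treating components separately. If $t$ is even then $r$ is even and at least $6$, so Theorem~\ref{evenreg} already supplies a zero-sum $3$-flow, which is a fortiori a zero-sum $5$-flow; hence from now on I would assume $t$ is odd, i.e. $r\equiv 3\pmod 6$. The core idea for the odd case is to turn a factor into a flow that uses only two labels: suppose $G$ has a $t$-factor $H$ (a spanning $t$-regular subgraph), and set $f(e)=-2$ for $e\in E(H)$ and $f(e)=+1$ otherwise. At each vertex exactly $t$ incident edges lie in $H$ and the remaining $r-t=2t$ do not, so the local sum is $-2t+2t=0$; since all values lie in $\{\pm 1,\pm 2\}\subseteq\{\pm 1,\ldots,\pm 4\}$, this is a zero-sum flow. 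Thus it suffices to produce a $t$-factor.

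To push the reduction toward a single base case, for $t\ge 3$ I would try to peel off one cubic factor $K$ (a spanning $3$-regular subgraph): its complement $G\setminus K$ is $(r-3)$-regular with $r-3=3(t-1)$ even, so Theorem~\ref{evenreg} furnishes a zero-sum flow on $G\setminus K$. Because $E(K)$ and $E(G\setminus K)$ are disjoint, the values of the two flows are never added to one another and stay inside $\{\pm 1,\ldots,\pm 4\}$; so it remains only to exhibit a zero-sum flow on the cubic graph $K$ and take the union of the two flows. In this way, whenever the required odd factors exist, the whole statement reduces to the cubic case $r=3$ (which is itself the instance $t=1$ of the construction above, a perfect matching labeled $-2$ against a $2$-factor labeled $+1$). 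The existence of these odd factors is governed by a Tutte-type odd-component condition, and it is precisely its failure that will carry the difficulty.

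For a cubic $G$ the clean case is when $G$ is bridgeless: by Petersen's theorem it has a perfect matching $M$, and $G\setminus M$ is a $2$-factor, so assigning $+2$ to $M$ and $-1$ to $G\setminus M$ yields vertex sums $2-1-1=0$, a zero-sum $3$-flow. The genuine difficulty, and the reason the bound is $5$ rather than $3$, is a cubic graph with no perfect matching, equivalently one carrying a Tutte obstruction (a set $S$ with more than $|S|$ odd components in $G-S$), which forces bridges and makes the two-label trick unavailable. I would attack these by induction on the number of bridges: cut $G$ at a bridge $uv$ into two pieces, each an otherwise cubic graph with a single degree-$2$ vertex that carries a prescribed value on its pendant half-edge, solve each piece recursively, and then glue the values along the bridge. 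The parity defect created at such a cut is exactly what must be absorbed by the larger labels $\pm 3,\pm 4$, and arranging the prescribed boundary values to be simultaneously consistent across all bridges is the step I expect to be the main obstacle; this is also where the value $5$ becomes indispensable.
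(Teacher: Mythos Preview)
The paper does not prove this statement: Theorem~\ref{3reg} is quoted from reference~\cite{d} as a known result and is then used, together with Theorem~\ref{evenreg}, as an input to the proof of the main Theorem~6. There is therefore no in-paper argument to compare your proposal against.

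As to the proposal itself, it is an outline rather than a proof, and the places you flag as ``the difficulty'' are precisely where the argument is missing. Two concrete gaps: (i) Your reduction for odd $t$ hinges on the existence of a $t$-factor, or alternatively a spanning $3$-regular factor to peel off; but an odd-regular graph need not possess any odd-regular factor (indeed the cubic graph in Figure~1 of the paper has no $1$-factor, and similar Tutte obstructions block $3$-factors in higher odd-regular graphs). So neither the ``assign $-2$ to a $t$-factor'' construction nor the ``peel off a cubic factor'' step is available in general. (ii) For the base case $r=3$ with bridges, you propose cutting at a bridge and solving each side with a prescribed value on the pendant half-edge, but you do not show that the two prescribed values can be made to agree, nor that the recursion terminates with consistent boundary data across all bridges simultaneously. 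Since every odd case in your scheme ultimately rests on the cubic case, and the cubic case is left unresolved, the proposal does not yet establish the theorem.
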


\begin{remark} {There are some regular graphs with no zero-sum $4$-flow. To see this
consider the graph given in Figure $1$. To the contrary assume this the graph has a zero-sum $4$-flow. Since
the sum of the values of all edges incident with each vertex is zero, for every $v \in V(G)$,
$-2$ or $2$ should  appear in the neighborhood of $v$. On the other hand two numbers with absolute
value $2$ can not appear in the neighborhood of a vertex. So all edges of $G$ with values $\pm 2$
form a perfect matching. But by celebrated Tutte's  Theorem \cite[p.76] {bondy}, $G$ has no  perfect matching, a contradiction.}
\end{remark}

\vspace{3cm} \includegraphics{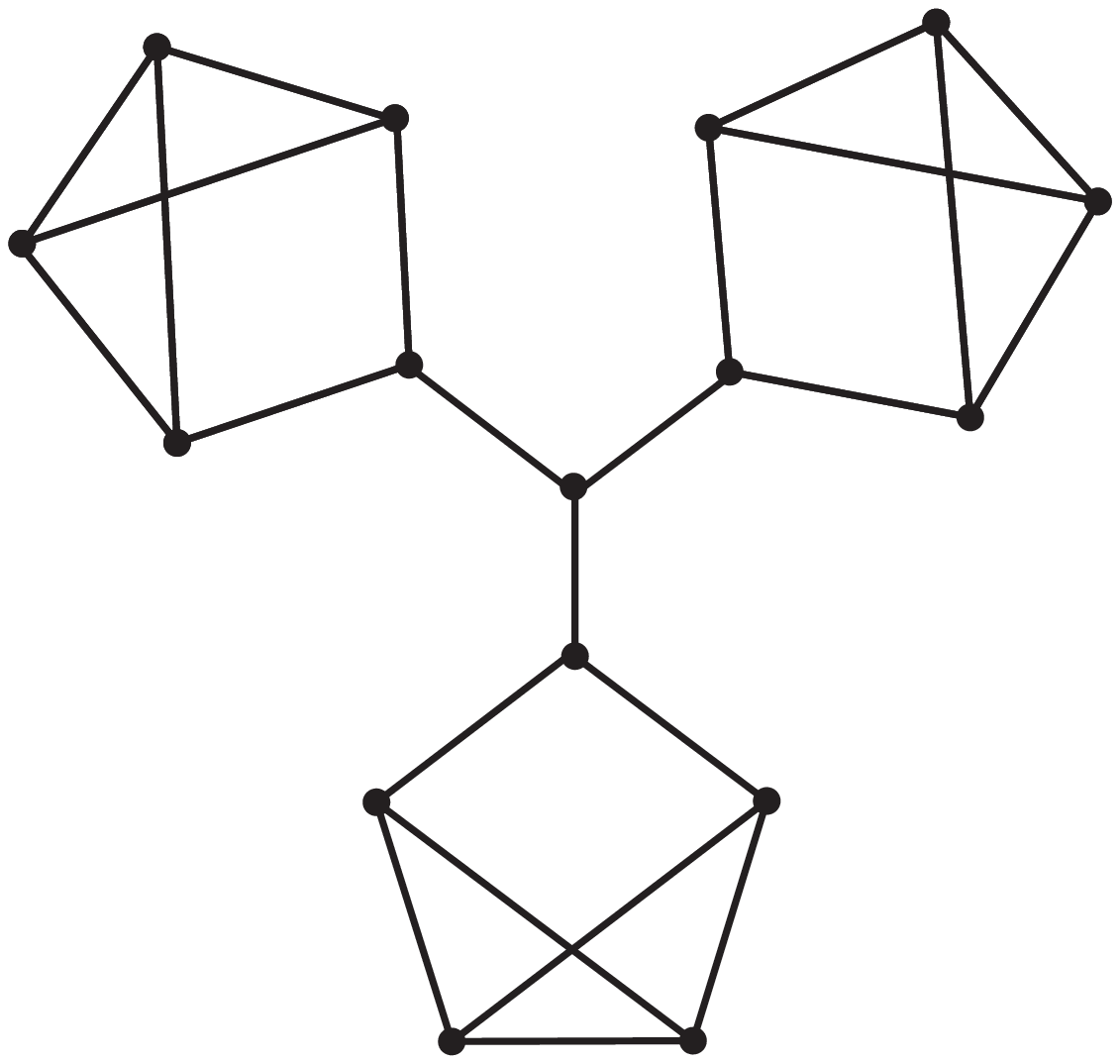} \vspace{4cm} \vspace{-2.5cm}$$\textmd{Figure $1$. A $3$-regular graph with no zero-sum $4$-flow}$$

In $2010$, the following result was proved.

\begin{thm} {\em \cite{d}}\label{bou}{ \textit Bouchet's Conjecture
and ZSC are equivalent.}
\end{thm}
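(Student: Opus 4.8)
The plan is to set up a value-preserving dictionary between nowhere-zero bidirected flows of bidirected graphs and zero-sum flows of ordinary graphs, and then translate each conjecture into the other. The key observation is that the zero-sum condition is exactly the bidirected-flow condition in the case where every half-edge is a tail: if, in a bidirected graph, every edge $e$ lies in $E^+(w)$ for each of its endpoints $w$, then $E^-(w)=\emptyset$ and $\sum_{e\in E^+(w)}f(e)=\sum_{e\in E^-(w)}f(e)$ becomes $\sum_{e\ni w}f(e)=0$. Hence, for an ordinary graph $G$, if $G^{+}$ denotes the bidirected graph obtained by orienting both ends of every edge of $G$ as tails, then a function $E(G)\to\mathbb{R}\setminus\{0\}$ is a zero-sum ($k$-)flow of $G$ precisely when it is a nowhere-zero bidirected ($k$-)flow of $G^{+}$. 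The implication ``Bouchet's Conjecture $\Rightarrow$ ZSC'' is then immediate: a graph $G$ with a zero-sum flow yields a bidirected graph $G^{+}$ with a nowhere-zero bidirected flow, to which Bouchet's Conjecture supplies a nowhere-zero bidirected $6$-flow, which is a zero-sum $6$-flow of $G$.

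For the converse I would pass from an arbitrary bidirected graph $B$ to an ordinary graph $G$. Call an edge of $B$ \emph{coherent} if one of its ends is a head and the other a tail, and \emph{incoherent} otherwise, and let $G$ be obtained from $B$ by subdividing every coherent edge exactly once while leaving the incoherent edges untouched. I claim that for every $k$, $B$ has a nowhere-zero bidirected $k$-flow if and only if $G$ has a zero-sum $k$-flow. Indeed, given a zero-sum flow $g$ of $G$, define $f$ on $E(B)$ by: on an incoherent edge $e=uv$, put $f(e)=g(uv)$ if both ends are tails and $f(e)=-g(uv)$ if both are heads; on a coherent edge $e=uv$ with subdivision vertex $m_e$ (which has degree $2$, so $g(um_e)+g(m_ev)=0$), put $f(e)=g(um_e)=-g(m_ev)\neq 0$. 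A short case analysis of the four edge-at-a-vertex incidence patterns shows that at each vertex $w$ of $B$ the contribution of every edge $e$ to $\sum_{e\in E^+(w)}f(e)-\sum_{e\in E^-(w)}f(e)$ equals the contribution of the corresponding edge of $G$ to the zero-sum of $g$ at $w$; as the latter is $0$, the map $f$ is a nowhere-zero bidirected flow of $B$. Reading these formulas backwards gives the inverse correspondence, and since $\{\pm1,\dots,\pm(k-1)\}$ is closed under negation, the dictionary preserves the $k$-flow property both ways. Therefore, if $B$ has a nowhere-zero bidirected flow then $G$ has a zero-sum flow, ZSC provides a zero-sum $6$-flow of $G$, and translating it back produces a nowhere-zero bidirected $6$-flow of $B$; this gives ``ZSC $\Rightarrow$ Bouchet's Conjecture'' and completes the equivalence.

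I expect the only real work to be the sign bookkeeping in the dictionary — matching the three edge types (tail--tail, head--head, head--tail) to the right gadget (a retained edge for the first two, a once-subdivided edge for the last) and checking the per-vertex balance — plus a short remark on loops (a coherent loop contributes $0$ at its vertex and becomes a pair of parallel edges through a new degree-$2$ vertex, again contributing $0$; an incoherent loop is absorbed by the usual convention that a loop counts twice in its vertex sum, or one may just assume $B$ is loopless). There is no deeper ingredient than the ``all tails'' reformulation and the subdivision trick; because the correspondence merely copies flow values up to sign, the constant $6$ is never touched, and in fact the same argument shows that for each fixed $k$ the ``$k$-flow versions'' of ZSC and of Bouchet's Conjecture are equivalent.
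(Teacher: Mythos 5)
The paper itself gives no proof of this statement---it is simply quoted from \cite{d}---but your argument is correct and is essentially the standard proof of that equivalence: the all-tails observation identifies zero-sum $k$-flows of $G$ with nowhere-zero bidirected $k$-flows of $G^{+}$, and subdividing each head--tail edge reduces an arbitrary bidirected graph to this case, with the value set $\{\pm1,\dots,\pm(k-1)\}$ preserved in both directions. The only point to tighten is the sign convention on a subdivided coherent edge (one must take $f(e)=g(um_e)$ where $u$ is the tail end), which your ``case analysis of the incidence patterns'' covers.
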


Motivated by Bouchet's Conjecture and along with Theorem \ref{bou}
we focused our attention to establish the Conjecture A. We show that except  $r = 5$, Conjecture A is true.

\vspace{9mm} \noindent{\bf\large 2. The Main Result }\vspace{5mm}

In this section we prove that every $r$-regular graph, $r\geq 3$,
$r\neq 5$, admits a zero-sum $5$-flow. Before establishing our main
result we need some notations and definitions.

Let $G$ be a  finite and undirected graphs with vertex set $V(G)$ and edge set $E(G)$,
where multiple edges and loops are admissible. A {\it
$k$-regular} graph is  a graph where each vertex is of degree $k$.
A subgraph $F$ of a graph  $G$, is a
{\it factor} of $G$ if $F$ is a spanning subgraph of $G$. If a
factor $F$ has all of its degrees equal to $k$, it is called a
{\it$k$-factor}.  Thus a
$2$-factor is a disjoint union of finitely many cycles that cover
all the vertices of $G$. A {\it $k$-factorization} of $G$ is a partition of  the edges of $G$ into disjoint $k$-factors.
For integers $a$ and $b$, $1 \leq a \leq b$, an {\it $[a, b]$-factor} of
$G$ is defined  to be a factor $F$ of $G$ such that
$a \leq d_F (v) \leq b$, for every  $v \in V (G)$.  For any vertex $v\in V(G)$, let
$N_{G}(v)=\{\,u \in V(G)\,|\,uv\in E(G)\,\}$.

The following two theorems are also needed.

\begin{thm}{ \em \cite{handbook}}\label{peterson}
\textit{Every $2k$-regular multigraph admits a $2$-factorozation.}
\end{thm}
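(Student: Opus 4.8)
The plan is to use the classical reduction of a $2$-factorization problem to an edge-coloring problem on a bipartite graph by way of an Eulerian orientation. First I would reduce to the connected case. A connected $2k$-regular multigraph has all vertices of even degree $2k$, and any $2$-factorization of it consists of exactly $k$ two-factors (each uses up degree $2$ at every vertex). Hence, given $2$-factorizations of the individual connected components of $G$, I can amalgamate them component by component into a $2$-factorization of $G$ without any mismatch, because every component of a $2k$-regular multigraph is itself $2k$-regular and so contributes exactly $k$ two-factors. So it suffices to $2$-factorize a connected $2k$-regular multigraph $G$.

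Next, since $G$ is connected and all degrees are even, $G$ has an Eulerian circuit; traversing it orients every edge of $G$ and produces a directed multigraph $D$ in which $d^+_D(v)=d^-_D(v)=k$ for every vertex $v$ (a loop at $v$, when traversed, contributes one to each of $d^+_D(v)$ and $d^-_D(v)$). Now form the bipartite multigraph $H$ with parts $V^+=\{v^+ : v\in V(G)\}$ and $V^-=\{v^- : v\in V(G)\}$, placing one edge $u^+v^-$ into $H$ for each arc $(u,v)$ of $D$. Then $H$ is $k$-regular and bipartite, and the arc-to-edge map $D\to H$ is a bijection on edges.

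Then I would apply König's theorem on edge-colorings of bipartite multigraphs: the edge set of the $k$-regular bipartite multigraph $H$ partitions into $k$ perfect matchings $M_1,\dots,M_k$. Pulling each $M_i$ back to $G$, the edges of $G$ whose arcs in $D$ correspond to the edges of $M_i$ form a set $F_i$ in which every vertex $v$ meets exactly one arc leaving $v$ (since $v^+$ is saturated by $M_i$) and exactly one arc entering $v$ (since $v^-$ is saturated), so $d_{F_i}(v)=2$; thus $F_i$ is a $2$-factor of $G$. Because the $M_i$ partition $E(H)$ and the arc--edge correspondence is a bijection, $F_1,\dots,F_k$ partition $E(G)$, which is the desired $2$-factorization.

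There is no genuine obstacle here — this is the textbook argument — so the only points requiring care are bookkeeping: loops and parallel edges must be tracked correctly, which is why $H$ is taken to be a multigraph (a loop at $v$ becomes the legitimate bipartite edge $v^+v^-$, and parallel edges of $G$ become parallel edges of $H$) and why König's theorem must be invoked in its multigraph form; and in the reduction to components one must observe the ``exactly $k$ factors'' count so the per-component factorizations amalgamate cleanly. If a fully self-contained proof were wanted, I would instead peel off one $2$-factor at a time: find a perfect matching in $H$ directly from Hall's condition (which holds since $H$ is regular bipartite), remove it, and induct on $k$.
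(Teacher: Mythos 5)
The paper gives no proof of this statement; it is quoted as Petersen's 1891 theorem, so there is nothing internal to compare your argument against. Your proof is the standard and correct one: reduce to a connected component, orient its edges along an Eulerian circuit so that every vertex has in-degree and out-degree $k$, split each vertex $v$ into $v^+$ and $v^-$ to obtain a $k$-regular bipartite multigraph, decompose that into $k$ perfect matchings by K\"onig's edge-colouring theorem (in its multigraph form), and pull each matching back to a $2$-factor; the loop and parallel-edge bookkeeping is handled correctly, since a loop at $v$ becomes the legitimate edge $v^+v^-$ and contributes degree $2$ to whichever factor receives it.
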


\begin{thm}{ \em \cite{kano}}\label{kano}
\textit{Let $r \geq 3$ be an odd integer and let $k$ be an integer such
that $1 \leq k \leq \frac{2r}{3}$. Then every $r$-regular graph
has a $[k-1, k]$-factor each component of which is regular.}
\end{thm}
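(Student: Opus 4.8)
The plan is to recast the statement as a partition problem. A $[k-1,k]$-factor $F$ has all its components regular precisely when no edge of $F$ joins a vertex of $F$-degree $k-1$ to one of $F$-degree $k$; writing $A$ and $B$ for the sets of vertices of $F$-degree $k-1$ and $k$ respectively, this says exactly that $F$ restricts to a $(k-1)$-factor of $G[A]$ and to a $k$-factor of $G[B]$. So it suffices to produce a partition $V(G)=A\sqcup B$ for which $G[A]$ has a $(k-1)$-factor and $G[B]$ has a $k$-factor. I would first dispose of the weaker question of the mere existence of a $[k-1,k]$-factor: because the two degree bounds differ by $1$, the odd-component term in Lovász's $(g,f)$-factor theorem vanishes (no component can have $g\equiv f$ on it), and its inequality collapses, using $d_G\equiv r$, to $k|S|+(r-k+1)|T|-e_G(S,T)\ge 0$ for all disjoint $S,T\subseteq V(G)$. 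Since $e_G(S,T)\le r\min(|S|,|T|)$, this is an elementary inequality valid for every $k\le r$. Thus a $[k-1,k]$-factor always exists, and the whole content of the theorem lies in forcing its components to be regular.

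To achieve the component condition I would argue by extremality. Among all $[k-1,k]$-factors of $G$, choose one, $F$, minimizing the number $\beta(F)$ of \emph{bad} edges, namely edges of $F$ joining a degree-$(k-1)$ vertex to a degree-$k$ vertex, and, among those, minimizing $|A|$. If $\beta(F)=0$ then $A$ and $B$ are unions of components of $F$ and we are done by the reduction above. Otherwise fix a bad edge $uv\in F$ with $d_F(u)=k-1$ and $d_F(v)=k$, and reroute $F$ along an alternating trail (alternating between edges of $F$ and edges outside $F$): starting at $v$, I would push the single unit of deficiency carried by $u$ through the graph until it can be absorbed, producing a new $[k-1,k]$-factor with strictly smaller value of the extremal invariant and contradicting the choice of $F$.

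The place where the hypothesis $k\le \tfrac{2r}{3}$ must be spent is precisely this rerouting. A vertex of $F$-degree $k$ has exactly $r-k$ incident edges outside $F$, and $k\le \tfrac{2r}{3}$ is equivalent to $r-k\ge \tfrac{r}{3}$, which I expect to be exactly the slack needed to continue the alternating trail at each vertex and to close it off without manufacturing new bad edges (rather than merely $k<r$, which governs existence). One must also respect parity: a $(k-1)$-regular graph on $A$ forces $(k-1)|A|$ to be even and a $k$-regular graph on $B$ forces $k|B|$ to be even, so the admissible partitions are constrained, consistently, since $r$ odd makes $|V(G)|$ even. Carrying out the surgery through the cases dictated by the parities of $k$, $|A|$ and $|B|$, and verifying that the extremal descent genuinely terminates at $\beta(F)=0$ rather than stalling, is the main obstacle; the reformulation and the existence step preceding it are routine bookkeeping.
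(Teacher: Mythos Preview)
The paper does not prove this theorem at all: it is quoted verbatim from Kano's 1986 paper (reference \cite{kano}) and used as a black box in the proof of the main result. So there is no ``paper's own proof'' to compare your proposal against; any argument you supply is necessarily going beyond what the present paper does.

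As for the proposal itself, your reformulation is correct and useful: a $[k-1,k]$-factor $F$ has every component regular exactly when no $F$-edge joins a vertex of $F$-degree $k-1$ to one of $F$-degree $k$, so the problem is indeed to find a partition $V(G)=A\sqcup B$ with a $(k-1)$-factor on $G[A]$ and a $k$-factor on $G[B]$. Your existence step for a bare $[k-1,k]$-factor via Lov\'asz's $(g,f)$-factor theorem is also fine: with $g\equiv k-1<f\equiv k$ the odd-component term does vanish, and the residual inequality $k|S|+(r-k+1)|T|\ge e_G(S,T)$ follows from $e_G(S,T)\le r\min(|S|,|T|)$ for every $1\le k\le r$.

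The genuine gap is exactly where you locate it. The extremal/alternating-trail step is not an argument but a hope: you have not specified the alternating rule, you have not shown that the trail can always be extended (this is where $k\le \tfrac{2r}{3}$ must do real work, and ``I expect'' is not a proof), and you have not shown that the surgery strictly decreases $\beta(F)$ rather than shuffling bad edges around. In factor theory such augmenting arguments are delicate precisely because lowering a degree at one endpoint can manufacture new bad edges at its $F$-neighbours; controlling this cascade is the entire content of the theorem, and nothing in your outline addresses it. Kano's original proof is where the substance lies; if you want a self-contained argument you must either reproduce his method or carry your extremal descent through in full, with the case analysis you yourself flag as ``the main obstacle''.
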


\begin{lem}\label{qr}
\textit{Let  $G$ be an $r$-regular graph. Then
for every even integer $q$, $ 2r \leq q \leq 4r$, there exists
a function $f: E(G) \rightarrow \{2, 3, 4\}$ such that for every
$u \in V(G)$, $\sum_{v \in N_G(u)}f(uv)=q$.}
\end{lem}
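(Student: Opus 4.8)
The plan is to reduce the statement to something about edge-colourings with three colours and then exploit the parity/counting freedom that the interval $[2r,4r]$ of admissible vertex-sums provides. First I would observe that assigning values in $\{2,3,4\}$ to the edges is equivalent, after subtracting $3$ from every edge value, to assigning values in $\{-1,0,1\}$, i.e.\ to choosing two edge-disjoint subgraphs $H_{-}$ (the edges labelled $2$) and $H_{+}$ (the edges labelled $4$) of $G$; the remaining edges get label $3$. For a vertex $u$ the sum $\sum_{v\in N_G(u)} f(uv)$ then equals $3r + d_{H_{+}}(u) - d_{H_{-}}(u)$. So the lemma is equivalent to: for every even $q$ with $2r\le q\le 4r$, writing $q = 3r + t$ with $-r\le t\le r$, one can find edge-disjoint subgraphs $H_{+},H_{-}$ of $G$ with $d_{H_{+}}(u)-d_{H_{-}}(u) = t$ at every vertex $u$. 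Note $q$ even forces $t\equiv r\pmod 2$, which is exactly the parity one needs for a $\{-1,0,1\}$-valued "flow-like" function with constant excess $t$ to exist at all.

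The cleanest way to produce such $H_{\pm}$ is via factors. If $r$ is even, then $G$ has a $2$-factorization by Theorem~\ref{peterson} (applied with $2k=r$); taking a union of some of the $2$-factors gives, for each even $j$ with $0\le j\le r$, a $j$-factor $F_j$. Given $t$ even with $-r\le t\le r$, set $H_{+}=F_{|t|}$ and $H_{-}=\varnothing$ if $t\ge 0$, and symmetrically if $t<0$; this realises every even $t$, hence every even $q$ in range. If $r$ is odd, then $t$ must be odd, and I would instead invoke Theorem~\ref{kano}: for each odd $k$ with $1\le k\le r$ (note $r\le \tfrac{2r}{3}$ fails, but $k$ only needs $k\le\tfrac{2r}{3}$, and actually a $1$-factor together with $2$-factors of the components suffices) one can peel off a $[k-1,k]$-factor whose components are regular. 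A regular component of a $[k-1,k]$-factor is either $(k-1)$-regular or $k$-regular; by iterating/adjusting one extracts genuine $k$-factors for the needed odd $k$, and then proceeds as in the even case. Alternatively, and more robustly, one can take a $1$-factor $M$ of $G$ (which exists for $r$-regular graphs with $r$ odd when $G$ is bridgeless, and one handles bridges separately), remove it to get an $(r-1)$-regular graph with $r-1$ even, apply the $2$-factorization argument there to build an $(r-1-j)$-factor for each even $j$, and recombine with $M$ to hit all odd values of $t$.

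In either parity, once the correct factor $F$ with $d_F(u)=|t|$ at every $u$ is in hand, define $f(e)=3+\mathrm{sgn}(t)$ for $e\in F$ and $f(e)=3$ otherwise; then $\sum_{v\in N_G(u)}f(uv)=3r+t=q$, as required, and $f$ takes values only in $\{2,3,4\}$. The main obstacle I anticipate is the odd-$r$ case: Theorem~\ref{kano} only guarantees $[k-1,k]$-factors with regular components for $k\le \tfrac{2r}{3}$, not genuine $k$-factors for all $k$ up to $r$, so I would need to argue carefully how to assemble the full range of odd excesses $t\in\{-r,\dots,r\}$ — most likely by combining a perfect matching (to shift parity) with $2$-factors obtained after its removal, and by checking the bridge/perfect-matching existence issue for odd-regular graphs (using that we may assume $G$ connected and, if it has a bridge, splitting the problem across the bridge since the required sum $q$ is the same on both sides). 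The even-$r$ case, by contrast, is a routine consequence of Theorem~\ref{peterson}.
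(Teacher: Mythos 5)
Your reduction to realising an excess $t=q-3r$ by a $\{-1,0,1\}$-valued perturbation of the constant assignment $3$ is sound, and your even-$r$ argument (build a $|t|$-factor as a union of $2$-factors from Petersen's theorem and shift its edges by $\pm 1$) is correct and essentially equivalent to the paper's even case. The genuine gap is the odd-$r$ case, and none of the routes you sketch closes it. Kano's theorem only yields a $[k-1,k]$-factor whose components are regular, and only for $k\le \tfrac{2r}{3}$; you give no actual argument for how to ``iterate/adjust'' this into a genuine $k$-factor for every odd $k$ up to $r$, and in general one cannot. The perfect-matching route fails outright: an odd-regular graph need not have a perfect matching even when connected --- the paper's own Figure~1 exhibits a $3$-regular graph with no perfect matching --- and bridgelessness guarantees one only for $r=3$ (for larger odd $r$ one needs on the order of $(r-1)$-edge-connectivity). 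The suggestion to ``split the problem across a bridge'' is not an argument, since the two sides of a bridge are not regular graphs and the lemma cannot simply be re-applied to them.

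The paper's proof sidesteps all of this with one idea your proposal is missing: for odd $r$, double every edge of $G$ to obtain a $2r$-regular \emph{multigraph} $G'$, apply Petersen's theorem (which holds for multigraphs) to decompose $G'$ into $2$-factors $F_1,\dots,F_r$, and define $g:E(G')\rightarrow\{1,2\}$ constant on each $2$-factor, with $g\equiv 2$ on exactly $\tfrac{q-2r}{2}$ of them --- an integer between $0$ and $r$ precisely because $q$ is even and $2r\le q\le 4r$ --- so that every vertex of $G'$ sees sum $q$. Folding back by setting $f(e)=g(e)+g(e')$ for each original edge $e$ and its duplicate $e'$ gives values in $\{2,3,4\}$ and vertex sums equal to $q$, with no need for perfect matchings, odd-degree factors, or any connectivity hypothesis.
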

\begin{proof}
{First assume that  $r$ is an odd integer. For every edge $e=uv$, we add a new edge $e'=uv$ to the graph $G$ and call the resultant graph by $G'$.
Clearly, $G'$ is a $2r$-regular
 multigraph. By Theorem \ref{peterson}, $G'$ admits   a $2$-factorization  with $2$-factors $F_1, \ldots, F_{r}$.
Now,  for every $e \in F_i$, $1 \leq i \leq r$, we define a  function $g: E(G') \rightarrow \{1,2\}$ as
follows:

$$g(e)=\left\{
  \begin{array}{ll}
     2, & \hbox{$1 \leq i \leq  \frac{q-2r}{2}$;}\\
 1, & \hbox{$ \frac{q-2r}{2}<i$.}

  \end{array}
\right.$$

Therefore   for each  $v
\in V(G')$, $\sum_{v \in N_{G'}(u)}g(uv)=q$. Now,  define a function $f: E(G) \rightarrow \{2,3,4\}$ such that
for every $e=uv \in E(G)$, $f(e)= g(e)+g(e')$, where $e'=uv$ in $G'$.
Then for every $u \in V(G)$,    $\sum_{v \in N_G(u)}f(uv)=q$, as desired.

Now, let  $r$ be an even integer. Since $G$ is an
$r$-regular graph, by Theorem \ref{peterson}, $G$ admits a
$2$-factorization  with $2$-factors $F_1, \ldots, F_{\frac{r}{2}}$. Now, for every $e\in F_i$,  $1 \leq i \leq\frac{ r}{2}$, we define a function
$f: E(G) \rightarrow \{2, 3, 4\}$ as follows:

$$f(e)=\left\{
   \begin{array}{ll}
     4, & \hbox{$1 \leq i \leq \lfloor \frac{q-2r}{4}\rfloor $;} \\
     3, & \hbox{$\lfloor \frac{q-2r}{4} \rfloor < i\leq  \lceil \frac{q-2r}{4} \rceil$;} \\
     2, & \hbox{$ \lceil \frac{q-2r}{4} \rceil<i.$}
   \end{array}
 \right.$$

It is not hard to verify that for every $u \in V(G)$,   $\sum_{v
\in N_G(u)}f(uv)=q$, as desired. }
\end{proof}

Now, we are in a position to prove our main theorem.

\begin{thm}
\textit{Let $r \geq 3$ and $r \neq 5$. Then
every $r$-regular graph has a zero-sum $5$-flow.}
\end{thm}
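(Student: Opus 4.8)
The plan is to split into cases according to the residue of $r$, using the known results and Lemma~\ref{qr} as the main engine. The cases $r \equiv 0 \pmod 3$ are already handled by Theorem~\ref{3reg}, and even $r$ with $r \geq 4$ is handled by Theorem~\ref{evenreg} (which even gives a zero-sum $3$-flow), so it suffices to treat odd $r$ with $r \geq 7$ and $r \not\equiv 0 \pmod 3$; that is, $r \equiv 1$ or $5 \pmod 6$, with $r \geq 7$. For such $r$, I would like to decompose the edge set of $G$ into a few regular factors whose degrees I can control, assign to each factor a constant value from $\{\pm 1, \pm 2, \pm 3, \pm 4\}$, and arrange that the contributions cancel at every vertex.

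The key tool for producing a good factor is Theorem~\ref{kano}: since $r$ is odd and $r \geq 3$, for any $k$ with $1 \leq k \leq \frac{2r}{3}$ there is a $[k-1,k]$-factor $F$ each of whose components is regular. I would pick $k$ so that $F$ conveniently splits off an even-regular or small-regular piece. Concretely, write $r = k + (r-k)$ where $F$ is the $[k-1,k]$-factor and $H = G - E(F)$ is the complementary factor; on each component, $F$ is $d$-regular with $d \in \{k-1,k\}$ and correspondingly $H$ is $(r-d)$-regular there. Choosing $k$ of the right parity (using $k \leq \frac{2r}{3}$, which for $r \geq 7$ leaves room to take $k$ as large as $4$ or $5$), I can guarantee that on every component at least one of $F$, $H$ is even-regular, hence $2$-factorable by Theorem~\ref{peterson}, and the other is $c$-regular for some fixed small $c$. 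Then I apply Lemma~\ref{qr} to the even-regular part with a target sum $q$ chosen so that, after negating all the values on the $2$-factors we want "negative", the contribution of that part at each vertex is exactly $-c \cdot m$ where $m \in \{1,2,3,4\}$ is the constant value I put on the complementary $c$-regular part. Matching $q$ to $c\cdot m$ (both constrained to manageable ranges) is the arithmetic heart of the argument, and the constraint $2r \leq q \leq 4r$ in Lemma~\ref{qr} together with $1 \leq k \leq \frac{2r}{3}$ is exactly what makes it feasible once $r \geq 7$; the excluded value $r = 5$ is precisely where these inequalities leave no valid choice.

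The main obstacle I expect is the bookkeeping when the factor $F$ from Theorem~\ref{kano} is not connected-regular in a uniform way across $G$ — different components of $F$ may be $(k-1)$-regular and others $k$-regular, so the "complementary degree" $r-d$ is not constant on $G$. To handle this I would treat each component of $F$ separately and note that within a component the relevant factor is genuinely regular, so Lemma~\ref{qr} applies component-by-component with possibly different $q$; since a zero-sum flow is a purely local (per-vertex) condition, gluing the component-wise assignments back together causes no difficulty. A secondary check is that the values produced never become zero: Lemma~\ref{qr} outputs values in $\{2,3,4\}$ and the complementary factor gets a nonzero constant, and after sign changes all values lie in $\{\pm 1, \ldots, \pm 4\}$, so the result is genuinely a zero-sum $5$-flow. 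Finally one records that the remaining small odd cases not covered above are $r = 3$ (Theorem~\ref{3reg}) and $r = 7$ (smallest genuinely new case, handled by the construction with, say, $k = 4$), completing the proof for all $r \geq 3$ with $r \neq 5$.
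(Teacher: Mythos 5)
Your overall strategy coincides with the paper's: dispose of even $r$ and $r=3$ via Theorems~\ref{evenreg} and~\ref{3reg}, take a $[k-1,k]$-factor with regular components from Theorem~\ref{kano}, apply Lemma~\ref{qr} to its regular pieces with suitable targets $q$, and put a single constant on the remaining edges. However, two points in your plan are genuine gaps, not bookkeeping. First, the complement $H=G-E(F)$ of the Kano factor is \emph{not} regular (its degree at $v$ is $r-k$ or $r-k+1$ according to which component of $F$ contains $v$) and it does not split along the components of $F$: an edge of $H$ may join two different components of $F$. So you cannot apply Theorem~\ref{peterson} or Lemma~\ref{qr} to ``$H$ on a component of $F$,'' and the phrase ``at least one of $F$, $H$ is even-regular, hence $2$-factorable'' cannot be used when the even one is $H$. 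The only safe move is to give all of $H$ one constant value and absorb the degree discrepancy into the choice of $q$ on each regular piece of $F$; note that Lemma~\ref{qr} already covers odd-regular graphs (its proof doubles the edges), so no parity maneuvering is needed. This is exactly what the paper does: with $k=\lfloor 2r/3\rfloor$ and $k'=r-k$ it assigns $-4$ to $E(G)\setminus E(F)$ and targets $q=4k'+4$ on the $(k-1)$-regular components and $q=4k'$ on the $k$-regular ones, which requires $k\le 2k'\le 2k-4$ --- an inequality valid precisely for odd $r\ge 9$.

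Second, your assertion that $r=7$ is ``handled by the construction with $k=4$'' is false. For $r=7$, $k=4$, $k'=3$ one has $2k'=6>2k-4=4$, and concretely: if the complement carries the constant $-m$, the $4$-regular components of $F$ need vertex-sum $3m$, which Lemma~\ref{qr} (even $q$ with $8\le q\le 16$) permits only for $m=4$, while the $3$-regular components need vertex-sum $4m$ with $6\le 4m\le 12$, forcing $m\in\{2,3\}$. No common $m$ exists, so the generic scheme cannot close the case $r=7$. The paper needs an extra idea here: decompose the $4$-regular components into two $2$-factors and assign them the values $1$ and $2$ --- the value $1$ lies outside the range $\{2,3,4\}$ of Lemma~\ref{qr} --- giving vertex-sum $6$, balanced by $-2$ on the three complement edges, while the $3$-regular components get $q=8$ from Lemma~\ref{qr}, balanced by $-2\cdot 4$. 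Without this (or an equivalent) step, your argument leaves $r=7$ unproved; the rest of your outline, once the constant is pinned to the complement and the arithmetic above is supplied, matches the paper's proof for odd $r\ge 9$.
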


\begin{proof}
{
First we prove the theorem for  $r=7$. Let $G$ be a $7$-regular
graph. Then by Theorem \ref{kano}, $G$ has a $[3, 4]$-factor, say $H$,
whose  components are regular. Let $H_1$ be the union of the
$3$-regular components of $H$ and let $H_2$ be the union of
$4$-regular components of $H$.  By Theorem \ref{peterson}, $H_2$
can be decomposed into two $2$-factors $H'_{2}$ and $H''_{2}$.
Assign $1$ and $2$ to all edges of $H'_{2}$ and $H''_{2}$, respectively.
 By Lemma \ref{qr}, there exists a function $f: E(H_1)
\rightarrow \{2, 3, 4\}$ such that for every $u \in V(H_1)$,
$\sum_{v \in N_{H_1}(u)}f(uv)=8$. Now, assign $-2$ to  every edge in $E(G)\setminus E(H)$ and we are done.

Now, let $r\geq 9$ be an odd integer. By Theorem
\ref{kano},  for every $k$, $k \leq \frac{2r}{3}$, $G$ has a
$[k-1, k]$-factor whose components are regular. Let $k=\lfloor
\frac{2r}{3} \rfloor$,  $k'= r-k$, and  $H$ be a $[k-1,
k]$-factor of $G$ such that $H_1$ be the union of $(k-1)$-regular
subgraph of $H$ and $H_2=H \setminus H_1$. It can be easily
checked that $k \leq 2k' \leq 2k-4$. Hence  by Lemma \ref{qr}, there
exists a function $f:E(H_1) \longrightarrow \{2, 3, 4\}$
such that for every $u \in V(H_1)$, $\sum_{v \in
N_{H_1}(u)}f(uv)= 4k'+4$. Also by Lemma \ref{qr}, there exists a
function $f:E(H_2) \longrightarrow \{2, 3, 4\}$  such that
for every $v \in V(H_2)$,
 $\sum_{v \in N_{H_2}(u)}f(uv)= 4k'$.
 Finally assign $-4$ to  every edge of $E(G)\setminus E(H)$. Now, by Theorem \ref{evenreg} and
 Theorem \ref{3reg}   the proof is complete. }
\end{proof}

\noindent {\bf Acknowledgements.}  The authors are indebted to the
School of Mathematics, Institute for Research in Fundamental
Sciences (IPM) for the support. The research of the first author and the
second author were in parts supported by grantns from IPM (No.
88050212) and (No.  88050042), respectively.

\providecommand{\bysame}{\leavevmode\hbox
to3em{\hrulefill}\thinspace}

\end{document}